\documentclass[11pt]{article}
\usepackage[margin=1in]{geometry}
\geometry{letterpaper}

\usepackage{amssymb,amsfonts,amsmath,bbm,mathrsfs,stmaryrd}
\usepackage{xcolor}
\usepackage{url}

\usepackage{enumerate}

\usepackage[colorlinks,
             linkcolor=black!75!red,
             citecolor=blue,
             pdftitle={},
             pdfproducer={pdfLaTeX},
             pdfpagemode=None,
             bookmarksopen=true
             bookmarksnumbered=true]{hyperref}

\usepackage{tikz}
\usetikzlibrary{cd}
\usetikzlibrary{arrows,calc,decorations.pathreplacing,decorations.markings,intersections,shapes.geometric,through,fit,shapes.symbols,positioning,decorations.pathmorphing}

\usepackage{braket}
\usepackage{centernot}

\usepackage[amsmath,thmmarks,hyperref]{ntheorem}
\usepackage{cleveref}

\crefname{enumi}{}{}
\creflabelformat{enumi}{#2(#1)#3}

\crefname{section}{Section}{Sections}
\crefformat{section}{#2Section~#1#3} 
\Crefformat{section}{#2Section~#1#3} 

\crefname{subsection}{\S}{\S\S}
\crefformat{subsection}{#2\S#1#3} 
\Crefformat{subsection}{#2\S#1#3} 

\theoremstyle{plain}

\newtheorem{lemma}{Lemma}[section]
\newtheorem{proposition}[lemma]{Proposition}
\newtheorem{corollary}[lemma]{Corollary}
\newtheorem{theorem}[lemma]{Theorem}

\theoremstyle{nonumberplain}

\theoremstyle{plain}
\theorembodyfont{\upshape}
\theoremsymbol{\ensuremath{\blacklozenge}}

\newtheorem{definition}[lemma]{Definition}

\newtheorem{remark}[lemma]{Remark}

\crefname{definition}{definition}{definitions}
\crefformat{definition}{#2definition~#1#3} 
\Crefformat{definition}{#2Definition~#1#3} 

\crefname{ex}{example}{examples}
\crefformat{example}{#2example~#1#3} 
\Crefformat{example}{#2Example~#1#3} 

\crefname{remark}{remark}{remarks}
\crefformat{remark}{#2remark~#1#3} 
\Crefformat{remark}{#2Remark~#1#3} 

\crefname{convention}{convention}{conventions}
\crefformat{convention}{#2convention~#1#3} 
\Crefformat{convention}{#2Convention~#1#3}

\crefname{lemma}{lemma}{lemmas}
\crefformat{lemma}{#2lemma~#1#3} 
\Crefformat{lemma}{#2Lemma~#1#3} 

\crefname{proposition}{proposition}{propositions}
\crefformat{proposition}{#2proposition~#1#3} 
\Crefformat{proposition}{#2Proposition~#1#3} 

\crefname{corollary}{corollary}{corollaries}
\crefformat{corollary}{#2corollary~#1#3} 
\Crefformat{corollary}{#2Corollary~#1#3} 

\crefname{theorem}{theorem}{theorems}
\crefformat{theorem}{#2theorem~#1#3} 
\Crefformat{theorem}{#2Theorem~#1#3} 

\crefname{assumption}{assumption}{Assumptions}
\crefformat{assumption}{#2assumption~#1#3} 
\Crefformat{assumption}{#2Assumption~#1#3} 

\crefname{equation}{}{}
\crefformat{equation}{(#2#1#3)} 
\Crefformat{equation}{(#2#1#3)}

\theoremstyle{nonumberplain}
\theoremsymbol{\ensuremath{\blacksquare}}

\newtheorem{proof}{Proof}
\newcommand\pf[1]{\newtheorem{#1}{Proof of \Cref{#1}}}

\newcommand\bF{{\mathbb F}}

\newcommand\bN{{\mathbb N}}

\newcommand\bZ{{\mathbb Z}}

\newcommand\cI{{\mathcal I}}

\newcommand\cM{{\mathcal M}}
\newcommand\cN{{\mathcal N}}


\DeclareMathOperator{\id}{id}



\title{Projective discrete modules over profinite groups}
\author{Alexandru Chirvasitu and Ryo Kanda}


\begin{document}

\date{}

\newcommand{\Addresses}{{
  \bigskip
  \footnotesize

  \textsc{Department of Mathematics, University at Buffalo, Buffalo,
    NY 14260-2900, USA}\par\nopagebreak \textit{E-mail address}:
  \texttt{achirvas@buffalo.edu}

  \medskip

  \textsc{Department of Mathematics, Graduate School of
  Science, Osaka University, Toyonaka, Osaka, 560-0043, Japan}\par\nopagebreak \textit{E-mail address}:
  \texttt{ryo.kanda.math@gmail.com}

}}

\maketitle

\begin{abstract}
  We show that the category of discrete modules over an infinite profinite group has no non-zero projective objects and does not satisfy Ab4*. We also prove the same types of results in a generalized setting using a ring with linear topology.
\end{abstract}

\noindent {\em Key words: profinite group, discrete module, projective object, abelian category, Grothendieck category, linear topology}

\vspace{.5cm}

\noindent{MSC 2010: 18G05 (Primary); 20E18, 16D40, 18E15, 18A30 (Secondary)}

\tableofcontents

\section*{Introduction}

The starting point for the present note is the pervasive phenomenon whereby abelian categories encountered ``in nature'', housing various (co)homology theories, tend to have enough injectives but much more rarely have enough projectives.

This is familiar, for instance, for various flavors of sheaves:
\begin{itemize}
\item According to \cite[Exercise~III.6.2]{hrt} the categories of modules, coherent modules and quasi-coherent modules on projective lines over infinite fields (with the Zariski topology) do not have enough projectives.
\item For a more general class of schemes, it is shown in \cite[Theorem~1.1]{Kanda} that the category of quasi-coherent sheaves on a non-affine divisorial noetherian scheme (e.g.\ a non-affine quasi-projective schemes over a commutative noetherian ring) does not have enough projectives.
\item On a slightly different note, locally connected Hausdorff spaces with no isolated points admit no non-zero projective sheaves of abelian groups \cite[p.~30, Exercise~4]{bred}.
\end{itemize}

Here, we examine the category of {\it discrete modules} over a profinite group $G$, used in defining the cohomology groups $H^i(G,-)$ (see, e.g., \cite[\S 2]{ser-gal} for background and \Cref{se.prel} below for definitions).

Our main results show that if $G$ is infinite then the category of discrete modules
\begin{itemize}
\item admits no non-zero projective object (\Cref{th.iff-fin}) and
\item does not satisfy Grothendieck's \textnormal{Ab4*} condition (\Cref{pr.not-ab4}), that is, products are not exact.
\end{itemize}
Either of the two properties implies that the category does not have enough projectives.

We also consider the analogous question over fields (rather than the integers), resulting in a characterization of those profinite groups for which the category of discrete modules has enough (or equivalently, non-zero) projectives in characteristic $p$ (\Cref{th.fld}): they are exactly those whose Sylow $p$-subgroups in the sense of \cite[\S 1.4]{ser-gal} are finite. This will also provide a characterization of projectives in the said category (\Cref{cor.char-proj}).

A natural question is whether the techniques used in the proofs of the main results can be applied to more general Grothendieck categories. The discrete modules over a profinite group form a prelocalizing subcategory of the category of all modules, so one reasonable attempt is to generalize them to a prelocalizing subcategory of the category of modules over a ring. It is known that such subcategories bijectively correspond to linear topologies of the ring (see \cite[Proposition~VI.4.2]{Stenstrom}). We will express a necessary condition in terms of a linear topology, and prove a generalized results for those prelocalizing subcategories satisfying the condition (\Cref{thm.ring}).

\subsection*{Acknowledgements}

A.C. was partially supported by NSF grant DMS-1801011. R.K. was supported by JSPS KAKENHI Grant Numbers JP17K14164 and JP16H06337.

\section{Preliminaries}\label{se.prel}

Let $G$ be a profinite group. We denote by $(\cN,\le)$ the poset of open normal subgroups of $G$, ordered by inclusion.

\begin{definition}
  A {\it discrete $G$-module} is a (left) $G$-module $M$ with the property that the action $G\times M\to M$ is continuous when $M$ is equipped with the discrete topology.
	
  We write ${}_G^d\mathrm{Mod}$ for the category of discrete $G$-modules, ${}_{G}\mathrm{Mod}$ for the category of all $G$-modules (without topology). Note that ${}_{G}\mathrm{Mod}$ is canonically equivalent to the category of left modules over the group algebra $\bZ[G]$.
\end{definition}

\begin{remark}\label{rem.disc}
  Discrete modules can be characterized as those $G$-modules $M$ with the property that
  \begin{equation}\label{eq:1}
    M=\varinjlim_{H\in\cN} M^H
  \end{equation}
  where $M^{H}$ is the submodule of $M$ consisting of those elements $x$ that are fixed by every element of $H$ (see \cite[Proposition~6.1.2]{Wilson}).
  The direct limit can be replaced by a sum or a union.
  
  Since ${}_G^d\mathrm{Mod}$ is a full subcategory of the Grothendieck category ${}_{G}\mathrm{Mod}$ closed under subobjects, quotient objects, and coproducts (such a subcategory is called \emph{prelocalizing}, \emph{weakly closed}, or a \emph{hereditary pretorsion class}), it is also a Grothendieck category. The forgetful functor ${}_G^d\mathrm{Mod}\to{}_{G}\mathrm{Mod}$ has a right adjoint that sends a $G$-module $M$ to its largest discrete submodule $\varinjlim_{H} M^{H}$.
  
  The adjoint property tells us how to compute inverse limits in ${}_G^d\mathrm{Mod}$. Let $\{M_{i}\}_{i\in I}$ is an inverse system in ${}_G^d\mathrm{Mod}$ and denote by $\varprojlim_{i}M_{i}$ the inverse limit taken in ${}_G^d\mathrm{Mod}$ and by $\varprojlim^f_i M_{i}$ the one taken in ${}_G\mathrm{Mod}$ (which is simply the limit of corresponding abelian groups; the `$f$' superscript stands for ``full''). Then
  \begin{equation*}
    \varprojlim_{i}M_{i}=\varinjlim_{H\in\cN}\bigg(\varprojlim^f_i M_{i}\bigg)^{\!H},
  \end{equation*}
  which is the largest discrete submodule of $\varprojlim^f_i M_{i}$.
\end{remark}

We will refer briefly to coalgebras and comodules over fields, for which our background reference is \cite[Chapters 1 and~2]{dnr}.

Write $\cM^C$ for the category of right $C$-comodules. According to \cite[Corollary~2.4.21]{dnr}, $\cM^C$ has enough projectives if and only if every finite-dimensional $C$-comodule has a (finite-dimensional again) projective cover. Furthermore, according to the proof of \cite[Corollary~2.4.22]{dnr} every projective object in $\cM^C$ is a direct sum of finite-dimensional projective objects. These observations will be of use later.

\section{Main results}\label{se.main}

\begin{theorem}\label{th.iff-fin}
Let $G$ be a profinite group. The category ${}_G^d \mathrm{Mod}$ has non-zero projective objects if and only if $G$ is finite.
\end{theorem}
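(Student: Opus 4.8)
The plan is to prove both implications, the forward one being routine and the reverse one carrying all the content. If $G$ is finite then the discreteness condition \Cref{eq:1} is automatic (the trivial subgroup is open, so $M=M^{\{e\}}$), whence ${}_G^d\mathrm{Mod}={}_G\mathrm{Mod}\simeq\bZ[G]\text{-Mod}$, which certainly has non-zero projectives (e.g.\ $\bZ[G]$ itself). For the converse I argue contrapositively: assuming $G$ infinite, I show the only projective object is $0$. Throughout I use that the permutation modules $\bZ[G/H]$, $H\in\cN$, generate ${}_G^d\mathrm{Mod}$, and that $\mathrm{Hom}_{{}_G^d\mathrm{Mod}}(\bZ[G/H],M)\cong M^H$ naturally, since a $G$-map out of $\bZ[G/H]$ is determined by the image of the trivial coset, which must be $H$-fixed.

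The technical heart is a vanishing claim. Since $H$ is open of finite index in the infinite group $G$, it is itself infinite profinite, so I may choose a strictly descending chain $H=K_0\supsetneq K_1\supsetneq\cdots$ in $\cN$ with $[K_0:K_n]\to\infty$. I claim the inverse limit $L:=\varprojlim_n \bZ[G/K_n]$ taken in ${}_G^d\mathrm{Mod}$ is zero. By \Cref{rem.disc} this limit is the largest discrete submodule of the ordinary inverse limit $\widehat L=\varprojlim_n^f\bZ[G/K_n]$, so it suffices to show that an element $v=(v_n)_n\in\widehat L$ fixed by some $K_m$ vanishes. For $n\ge m$ every $K_m$-orbit on $G/K_n$ has size $[K_m:K_n]$ (using normality of $K_n$ in $G$), so the $K_m$-invariant $v_n$ is a $\bZ$-combination of orbit sums; since each orbit maps to a single coset of $K_m$ with multiplicity $[K_m:K_n]$, the transition map sends $v_n$ into $[K_m:K_n]\cdot\bZ[G/K_m]$, whence $v_m$ is divisible by $[K_m:K_n]$ in the free abelian group $\bZ[G/K_m]$. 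Letting $n\to\infty$ forces $v_m=0$, and running this over all $m'\ge m$ gives $v=0$.

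With the vanishing in hand the argument closes formally. Fix $H$ and a projective $P$. Since $\mathrm{Hom}(P,-)$ sends the tower of epimorphisms $\bZ[G/K_{n+1}]\twoheadrightarrow\bZ[G/K_n]$ to a tower of surjections of abelian groups, the tower $\{\mathrm{Hom}(P,\bZ[G/K_n])\}_n$ is Mittag--Leffler, so its limit surjects onto the $n=0$ term $\mathrm{Hom}(P,\bZ[G/H])$. On the other hand $\mathrm{Hom}(P,-)$ preserves limits, so that same limit equals $\mathrm{Hom}(P,L)=\mathrm{Hom}(P,0)=0$. Therefore $\mathrm{Hom}_{{}_G^d\mathrm{Mod}}(P,\bZ[G/H])=0$ for every $H\in\cN$.

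Finally, because the $\bZ[G/H]$ generate, the projective $P$ is a retract of some coproduct $\bigoplus_{i}\bZ[G/H_i]$, say with inclusion $\iota$ and retraction $\rho$ satisfying $\rho\iota=\mathrm{id}_P$. Each component $\iota_i\colon P\to\bZ[G/H_i]$ lies in $\mathrm{Hom}(P,\bZ[G/H_i])=0$, and since an element of the coproduct is determined by its finitely many nonzero components, $\iota=0$; hence $\mathrm{id}_P=\rho\iota=0$ and $P=0$, as desired. The main obstacle is the vanishing claim for $L$: everything else is formal, but establishing that the discrete inverse limit of the permutation modules collapses — which is exactly where the infinitude of $G$ (the unbounded indices $[K_0:K_n]$) is used — requires the orbit-sum/divisibility computation above, and one must phrase it along a countable chain so that Mittag--Leffler applies without set-theoretic caveats.
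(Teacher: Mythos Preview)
Your argument runs along the same lines as the paper's --- both hinge on the vanishing of $\varprojlim_n \bZ[G/K_n]$ in ${}_G^d\mathrm{Mod}$, proved via the same orbit-sum divisibility computation --- but your packaging is a bit cleaner: rather than embedding $P$ into a vanishing limit of large auxiliary modules $E_i$ as the paper does, you deduce $\mathrm{Hom}(P,\bZ[G/H])=0$ for every $H\in\cN$ directly and then kill $P$ as a retract of a coproduct of such modules.

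There is, however, one genuine gap in the vanishing step. You assert that ``it suffices to show that an element $v\in\widehat L$ fixed by some $K_m$ vanishes'', but the largest discrete submodule of $\widehat L$ consists of elements fixed by an \emph{arbitrary} $K'\in\cN$, and your chain $(K_n)_n$ need not be cofinal in $\cN$. (It cannot be when $G$ is not first-countable, e.g.\ $G=\prod_I\bZ/2$ for uncountable $I$, and even in the metrizable case you did not choose the $K_n$ cofinal.) So as written you have not shown $L=0$.

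The fix is immediate and is exactly what the paper does: for arbitrary $K'\in\cN$ the $K'$-orbits on $G/K_n$ have size $[K':K'\cap K_n]$, and the transition map $\bZ[G/K_n]\to\bZ[G/K_m]$ sends $K'$-invariants into $[K'\cap K_m:K'\cap K_n]\cdot\bZ[G/K_m]^{K'}$. Since $[G:K'\cap K_n]\ge[G:K_n]\to\infty$ while $[G:K'\cap K_m]$ is fixed, the index $[K'\cap K_m:K'\cap K_n]$ tends to infinity with $n$, and the rest of your argument goes through verbatim with $K'$ in place of $K_m$.
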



\begin{proof}
  One implication is obvious, so we assume that $G$ is an infinite profinite group and that $P$ is a nonzero projective object in ${}_G^d \mathrm{Mod}$. $P$ can be expressed as a union of its submodules $P^H$ as in \Cref{eq:1}, and hence we have a surjection
  \begin{equation*}
    \bigoplus_{H\in \cN}P^H\to P. 
  \end{equation*}
  In turn, every $P^H$ is surjected upon by some free $G/H$-module $F_H$, which is a direct sum of copies of $\bZ[G/H]$, and we have an epimorphism
  \begin{equation}\label{eq:2}
    F:=\bigoplus_{H\in\cN} F_H\to P. 
  \end{equation}

  Now let $H_0>H_1>\cdots$ be a strictly descending sequence of groups in $\cN$ (one exists, since $G$ is assumed infinite). For each non-negative integer $i$ we construct a surjection $E_i\to F$ defined by substituting $\bZ[G/(H\cap H_i)]$ for every $\bZ[G/H]$ summand of $F$ and surjecting
  \begin{equation*}
    \bZ[G/(H\cap H_i)]\to \bZ[G/H]
  \end{equation*}
  naturally.

  We now claim that the projectivity of $P$ entails a factorization 
  \begin{equation}\label{eq:4}
    \begin{tikzpicture}[auto,baseline=(current  bounding  box.center)]
      \path[anchor=base] (0,0) node (pl) {$P$} +(2,.5) node (lim) {$\varprojlim_i E_i$} +(4,.5) node (f) {$F$} +(6,0) node (pr) {$P$};
      \draw[->] (pl) to[bend right=6] node[pos=.5,auto,swap] {$\scriptstyle \id$} (pr);
      \draw[->] (pl) to[bend left=6] node[pos=.5,auto] {$\scriptstyle$} (lim);
      \draw[->] (lim) to[bend left=6] node[pos=.5,auto] {$\scriptstyle$} (f);
      \draw[->] (f) to[bend left=6] node[pos=.5,auto] {$\scriptstyle$} (pr);
    \end{tikzpicture}
  \end{equation}
  where the limit is taken in the category ${}^d_G\mathrm{Mod}$.

  To see this, note first that $P$ splits off as a summand of $F$. Since $E_0$ surjects onto the latter, we further obtain an direct summand embedding $P\to E_0$; now repeat the procedure to lift this to a map $P\to E_1$ fitting into a triangle
  \begin{equation*}
    \begin{tikzpicture}[auto,baseline=(current  bounding  box.center)]
      \path[anchor=base] (0,0) node (pl) {$P$} +(2,.5) node (lim) {$E_1$} +(4,0) node (f) {$E_0$};
      \draw[->] (pl) to[bend right=6] node[pos=.5,auto,swap] {$\scriptstyle $} (f);
      \draw[->] (pl) to[bend left=6] node[pos=.5,auto] {$\scriptstyle$} (lim);
      \draw[->] (lim) to[bend left=6] node[pos=.5,auto] {$\scriptstyle$} (f);
    \end{tikzpicture}
  \end{equation*}
Continuing this recursively will produce \Cref{eq:4}. 

The contradiction will follow if we show that the limit $\varprojlim_i E_i$ in \Cref{eq:4} vanishes. As explained in \Cref{rem.disc}, the limit in ${}_G^d\mathrm{Mod}$ is obtained as the largest discrete submodule of the limit $\varprojlim^{f}_i E_i$ in ${}_G\mathrm{Mod}$.
We thus have to argue that $\varprojlim_i^f E_i$ contains no non-zero elements fixed by every open normal subgroup $H\trianglelefteq G$. To see this, recall that the connecting morphisms
\begin{equation*}
  E_{i+1}\to E_i,\quad i\in \bN
\end{equation*}
whose filtered limit we are taking are coproducts of copies of the standard epimorphisms
\begin{equation*}
  \bZ[G/(H\cap H_{i+1})]\to   \bZ[G/(H\cap H_{i})]
\end{equation*}
for the summands $\bZ[G/H]$ of $F$. To illustrate the claimed vanishing of the limit without irrelevant notational overhead we will consider the simpler limit
\begin{equation*}
  \varprojlim_i^f \bZ[G/(H\cap H_i)]
\end{equation*}
along the canonical surjections.

For every $K\in \cN$ the image of the $K$-invariants through
\begin{equation*}
  \bZ[G/(H\cap H_{j})]\to   \bZ[G/(H\cap H_{i})],\quad j>i
\end{equation*}
consists of $[K\cap H\cap H_{i}:K\cap H\cap H_{j}]$-multiples in the latter free abelian group. Indeed, if we write $N_{i}=H\cap H_{i}$ and $N_{j}=H\cap H_{j}$, then an element of $\bZ[G/N_{j}]^{K}$ is of the form
\begin{equation*}
	\sum_{gN_{j}\in G/N_{j}}n_{gN_{j}}\cdot gN_{j}
\end{equation*}
such that $n_{gN_{j}}=n_{g'N_{j}}$ whenever $gKN_{j}=g'KN_{j}$. In particular, the elements of $G/N_{j}$ in the same coset of $(K\cap N_{i})N_{j}$ have the same coefficient, and they are sent to a single element of $G/N_{i}$ whose coefficient is a multiple of
\begin{equation*}
	[(K\cap N_{i})N_{j}:N_{j}]=[K\cap N_{i}:K\cap N_{j}]=[K\cap H\cap H_{i}:K\cap H\cap H_{j}].
\end{equation*}

Since $[G:K\cap H]<\infty$, the sequence $\{K\cap H\cap H_{j}\}_{j=0}^{\infty}$ is strictly descending. Thus the index $[K\cap H\cap H_{i}:K\cap H\cap H_{j}]$ grows indefinitely with $j$ for fixed $i$. It follows that the image of
\begin{equation*}
  \left(\varprojlim_i^f \bZ[G/(H\cap H_i)]\right)^K
\end{equation*}
in every $\bZ[G/(H\cap H_i)]$ vanishes. In conclusion, as claimed, the maximal discrete submodule
\begin{equation*}
  {}_G^d\mathrm{Mod}\ \ni\ \varprojlim_i \bZ[G/(H\cap H_i)]\ \subseteq\ \varprojlim_K^f \bZ[G/(H\cap H_i)]\ \in\ {}_G\mathrm{Mod}
\end{equation*}
is trivial.
\end{proof}

\Cref{th.iff-fin} shows in particular that for infinite $G$ the category of discrete $G$-modules fails to have enough projective modules. That failure is in fact even stronger: recall that for a Grothendieck category, having enough projectives implies
\begin{itemize}
\item having non-zero projectives and  
\item being \textnormal{Ab4*}, i.e. having exact products.  
\end{itemize}

The next result proves that the second condition is also violated in the present context:

\begin{proposition}\label{pr.not-ab4}
Let $G$ be a profinite group. The category ${}_G^d \mathrm{Mod}$ satisfies \textnormal{Ab4*} if and only if $G$ is finite.
\end{proposition}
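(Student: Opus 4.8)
The plan is to prove the dichotomy directly in both directions. The easy direction is that finiteness of $G$ forces ${}_G^d\mathrm{Mod}={}_G\mathrm{Mod}$: when $G$ is finite the trivial subgroup is open normal, so every module is discrete by the criterion in \Cref{rem.disc}, and ${}_G^d\mathrm{Mod}$ is simply the module category over $\bZ[G]$. In a module category products are computed on the underlying abelian groups and are therefore exact, so \textnormal{Ab4*} holds.

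For the converse I would assume $G$ infinite and exhibit a family of epimorphisms whose product is not an epimorphism, which is precisely a failure of \textnormal{Ab4*}. The starting observation is the product computation recorded in \Cref{rem.disc}: a product in ${}_G^d\mathrm{Mod}$ is the largest discrete submodule of the product $\varprojlim^f$ taken in ${}_G\mathrm{Mod}$. Concretely, an element of the full product lies in the discrete product if and only if all of its coordinates are fixed by one common open normal subgroup $K$. Now fix a strictly descending chain $H_0>H_1>\cdots$ in $\cN$ (available since $G$ is infinite) and consider the augmentation epimorphisms $\epsilon_i\colon \bZ[G/H_i]\to \bZ$ onto the trivial module. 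Because each codomain is a trivial module, the product $\prod_i \bZ$ is already discrete and hence equals the product in ${}_G^d\mathrm{Mod}$, so the product map carries a discrete family $(x_i)$ to $(\epsilon_i(x_i))$.

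The heart of the argument is to show that $(1,1,1,\dots)$ is not in the image, so the product of the $\epsilon_i$ is not surjective and therefore not epic. Suppose $(x_i)$ were a preimage lying in the discrete product; then all $x_i$ are fixed by a single open normal subgroup $K$. Exactly as in the index computation in the proof of \Cref{th.iff-fin}, an element of $\bZ[G/H_i]^K$ is constant on the $K$-orbits of $G/H_i$, each of which has size $m_i:=[K:K\cap H_i]$ since $H_i$ is normal; hence its augmentation is divisible by $m_i$. The remaining point is the divergence $m_i\to\infty$: from $[G:K\cap H_i]\ge[G:H_i]\to\infty$ and the tower law $m_i=[G:K\cap H_i]/[G:K]$ one gets $m_i\to\infty$, so $\epsilon_i(x_i)\in m_i\bZ$ cannot equal $1$ for large $i$. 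This contradiction shows the product map misses $(1,1,1,\dots)$, completing the proof.

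I expect the main obstacle to be bookkeeping the discretization of the product correctly, namely the fact that membership in the product requires one open $K$ to work for all coordinates simultaneously, together with the index estimate $[K:K\cap H_i]\to\infty$. Both are close relatives of steps already carried out in the proof of \Cref{th.iff-fin}, which I would cite rather than repeat.
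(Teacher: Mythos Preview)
Your proof is correct and follows essentially the same approach as the paper: exhibit the all-$1$ element in a product of copies of $\bZ$ as a witness that the product of augmentation maps $\bZ[G/H]\to\bZ$ is not epic, using that any element of the discrete product must be $K$-fixed for a single $K\in\cN$ and that the augmentation of a $K$-invariant element is divisible by the $K$-orbit size. The only cosmetic difference is that the paper indexes over all of $\cN$ and, for a given $K$, picks a single $H\le K$ with $[K:H]>1$, whereas you index over a countable descending chain and argue $[K:K\cap H_i]\to\infty$; both variants work for the same reason.
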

\begin{proof}
Again, one implication is obvious, so we assume that $G$ is infinite. We will show that the product of the augmentation morphisms 
  \begin{equation}\label{eq:6}
    \bZ[G/H]\to \bZ[G/G]=\bZ
  \end{equation}
  for $H\in \cN$ is not epic. Indeed, for fixed $K\in \cN$ the image of $\bZ[G/H]^K$ through \Cref{eq:6} is contained in the ideal generated by $[K:H]$ whenever $H\le K$. It follows that the all-$1$ element of the product $\prod_{H\in\cN} \bZ$ is not contained in the image of $\prod_{H\in \cN}\bZ[G/H]$, proving the claim.
\end{proof}

\section{Ground fields}\label{se.fld}

The situation is rather different when working over a field $k$ in place of $\bZ$. First, recall the notion of {\it supernatural number} from \cite[\S 1.3]{ser-gal}: simply a formal product of the form
\begin{equation*}
  \prod_{\text{primes } p}p^{n_p},\quad n_p\in \bZ_{\ge 0}\cup\{\infty\}. 
\end{equation*}
A profinite group $G$ has an order $|G|$, well-defined as a supernatural number as the least common multiple of all orders $G/H$ for $H\in \cN$. Similarly, we can define the index $[G:H]$ as a supernatural number for every closed subgroup $H\le G$. There is also a concept of {\it Sylow $p$-subgroup} of $G$, i.e. a closed subgroup whose supernatural order is of the form $p^{n_p}$ (meaning it is {\it pro-$p$}, i.e. a filtered limit of finite $p$-groups) and whose index in $G$ does not have $p$ as a factor (or is coprime to $p$, in short). We refer to \cite[Chapter~1]{ser-gal} for details.

Now let $k$ be a ground field of characteristic $p$ (a prime or zero) and write ${}_G^d\mathrm{Vect}$ for the category of discrete $G$-modules over $k$ (note that we are suppressing $k$ from the notation, for brevity). The main result of the present section is a characterization of those $G$ for which this category admits non-zero projectives.

\begin{theorem}\label{th.fld}
  For a profinite group $G$ and a field $k$ of characteristic $p$ the following conditions are equivalent:
  \begin{enumerate}[(a)]
  \item\label{item:1} The category ${}_G^d\mathrm{Vect}$ has a non-zero projective object.
  \item\label{item:2} The category ${}_G^d\mathrm{Vect}$ has enough projective objects.
  \item\label{item:3} The characteristic $p$ of $k$ has finite exponent in the supernatural number $|G|$.    
  \end{enumerate}
\end{theorem}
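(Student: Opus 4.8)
The plan is to prove the implications $(b)\Rightarrow(a)$, $(c)\Rightarrow(b)$, and $(a)\Rightarrow(c)$, which together give all the equivalences. The first is immediate once projectives are produced (and ${}_G^d\mathrm{Vect}\ne 0$). The conceptual pivot, which I would isolate first, is a purely group-theoretic reformulation of (c): writing $v_p$ for the exponent of $p$ in a supernatural number, the multiplicativity $|G|=[G:H]\cdot|H|$ for open $H$ shows that $v_p(|G|)<\infty$ if and only if the supremum $\sup_{H\in\cN}v_p([G:H])$ is attained, if and only if there is an open normal subgroup $H_0$ that is \emph{pro-$p'$} (i.e.\ $p\nmid|H_0|$); and then every $H\in\cN$ with $H\le H_0$ is again pro-$p'$, so such subgroups are cofinal in $\cN$. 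This is exactly where finiteness of the Sylow $p$-subgroup enters.

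For $(c)\Rightarrow(b)$ I would exploit that a pro-$p'$ group has no mod-$p$ cohomology. Fixing a cofinal family of pro-$p'$ subgroups $H\in\cN$ as above, the invariants functor $(-)^H\colon {}_G^d\mathrm{Vect}\to {}_{G/H}\mathrm{Vect}$ is \emph{exact}: for a short exact sequence with kernel $M'$ the obstruction to right-exactness lies in the continuous cohomology $H^{\ge 1}(H,M')$, and since $M'$ is a characteristic-$p$ (hence $p$-torsion) discrete $H$-module while $H$ is pro-$p'$, these vanish by applying Maschke's theorem to each finite $p'$-quotient and passing to the filtered colimit. Being the right adjoint of inflation, the exactness of $(-)^H$ shows that $\mathrm{infl}_H$ preserves projectives; in particular each finite-dimensional $\mathrm{infl}_H\,k[G/H]$ is projective in ${}_G^d\mathrm{Vect}$. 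As every non-zero element of an object $M=\varinjlim_H M^H$ lies in some $M^H$ with $H$ pro-$p'$ (intersect with $H_0$), these objects form a generating set of projectives, and a Grothendieck category with such a set has enough projectives.

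For $(a)\Rightarrow(c)$ I would argue the contrapositive and recycle the mechanism of \Cref{th.iff-fin}. Assuming $v_p(|G|)=\infty$ and a non-zero projective $P$, I cover $P$ by a coproduct $F=\bigoplus_H F_H$ of free $k[G/H]$-modules as in \Cref{eq:2}, form the refinements $E_i\to F$ obtained by substituting $k[G/(H\cap H_i)]$ for each summand $k[G/H]$ along a descending chain $H_0>H_1>\cdots$ in $\cN$, and use projectivity to factor $\id_P$ through $\varprojlim_i E_i$ exactly as in \Cref{eq:4}. The contradiction again follows from $\varprojlim_i E_i=0$, which by \Cref{rem.disc} means that the largest discrete submodule of $\varprojlim_i^f E_i$ is trivial. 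The computation of the transition maps on $K$-invariants is identical to the integral case, except that the relevant coefficients, being multiples of $[K\cap H\cap H_i:K\cap H\cap H_j]$, now vanish in $k$ as soon as $p$ divides that index.

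The main obstacle is thus the choice of the chain $(H_i)$: I must guarantee that, for \emph{every} $K\in\cN$, the $p$-part of $[K\cap H\cap H_i:K\cap H\cap H_j]$ grows without bound in $j$, and this has to be arranged uniformly, since $G$ need not be metrizable. I would resolve it by taking $H_i\in\cN$ descending with $v_p([G:H_i])\ge i$, which is possible precisely because $v_p(|G|)=\infty$. Then $L:=\bigcap_i H_i$ satisfies $v_p([G:L])=\infty$, so the Sylow $p$-subgroup of $G/L$ is infinite; hence every open subgroup $UL/L$ of $G/L$ meets that Sylow in an open, therefore infinite, pro-$p$ subgroup, yielding $v_p([U:U\cap L])=\infty$ for every open $U$. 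Taking $U=K\cap H$ forces $v_p([K\cap H:K\cap H\cap H_j])\to\infty$, hence $v_p([K\cap H\cap H_i:K\cap H\cap H_j])\to\infty$ for each fixed $i$, which is exactly what kills the $K$-invariants of the limit in every $E_i$. I would finally note that the comodule dictionary recalled in \Cref{se.prel}, via \cite[Corollaries 2.4.21 and 2.4.22]{dnr}, repackages the equivalence of (a) and (b) and feeds into the description of projectives promised in \Cref{cor.char-proj}.
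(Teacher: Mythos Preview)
Your proof is correct, but both non-trivial implications follow a genuinely different path from the paper's.

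For $(c)\Rightarrow(b)$, the paper fixes $H\in\cN$ with $v_p([G:H])=v_p(|G|)$, shows that a projective $k[G/H]$-module $P$ remains projective over a Sylow $p$-subgroup $S\le G$ (which maps isomorphically onto a Sylow of $G/H$), and then invokes the injectivity of the restriction map $H^i(G,-)\to H^i(S,-)$ from \cite[\S 2.4]{ser-gal} to conclude that $P$ is projective over $G$. Your route via the exactness of $(-)^H$ for pro-$p'$ open normal $H$ (hence inflation preserves projectives) is more elementary: it avoids profinite Sylow theory and the restriction--corestriction machinery entirely, relying only on Maschke's theorem and the colimit description of continuous cohomology. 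It also makes the generating set of projectives $\{k[G/H]:H\in\cN,\ p\nmid|H|\}$ completely explicit.

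For $(a)\Rightarrow(c)$, the paper exploits the coalgebra dictionary from \Cref{se.prel} to reduce to a \emph{finite-dimensional} projective $P$, realized as a summand of some $k[G/H]$; then a single subgroup $K\le H$ with $p\mid[H:K]$ already obstructs the splitting of $k[G/K]\twoheadrightarrow P$ (the lifted summand is $H/K$-fixed, hence maps to zero). This is considerably shorter than your recycling of the inverse-limit mechanism of \Cref{th.iff-fin}, though your argument has the virtue of being self-contained (no appeal to \cite{dnr}) and of showing directly how the integral proof adapts: one only needs the $p$-part of the indices to be unbounded, which your choice $v_p([G:H_i])\ge i$ guarantees via the multiplicativity $[K\cap H:K\cap H\cap H_j]=[G:H_j]\cdot[H_j:K\cap H\cap H_j]/[G:K\cap H]$. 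Your detour through $L=\bigcap_i H_i$ is unnecessary but harmless; the compactness argument showing the $K\cap H\cap H_j$ are cofinal among open subgroups containing $K\cap H\cap L$ does what you need.
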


\begin{remark}
  The condition in \labelcref{item:3} is by convention assumed to hold vacuously when the characteristic is zero.
\end{remark}

Before settling into the proof proper we make the preliminary observation that the category ${}^d_G\mathrm{Vect}$ is nothing but the category $\cM^C$ of comodules over the $k$-coalgebra $C=k(G)$ of continuous $k$-valued functions on $G$ (where $k$ is equipped with the discrete topology).

\pf{th.fld}
\begin{th.fld}
  That \labelcref{item:1} follows from \labelcref{item:2} is obvious, so we focus on proving \labelcref{item:3} $\Rightarrow$ \labelcref{item:2} and \labelcref{item:1} $\Rightarrow$ \labelcref{item:3}.

  \vspace{.5cm}

  {\bf \labelcref{item:1} $\Rightarrow$ \labelcref{item:3}.} As noted at the end of \Cref{se.prel}, the existence of a non-zero projective entails the existence of a finite-dimensional one ($P$, say). $P$ will then be projective over some group algebra $k[G/H]$ for $H\in \cN$, and we can furthermore assume that it is a summand of $k[G/H]$ (because it can be written as a direct sum of indecomposable projectives, which are summands of $k[G/H]$).

  Now, if \labelcref{item:3} were false then we could find a subgroup $K\in \cN$ of $H$ with $p$ dividing $[H:K]$. Now consider the projection
  \begin{equation}\label{eq:5}
    k[G/K]\to k[G/H]\to P.
  \end{equation}
  Splitting it would provide a summand $\cong P$ of $k[G/K]$ acted upon trivially by $H/K$. But then, since $p$ divides the order of this latter group, the image of this summand through \Cref{eq:5} must vanish. This gives a contradiction and proves the desired implication.
   
  \vspace{.5cm}
  
  {\bf \labelcref{item:3} $\Rightarrow$ \labelcref{item:2}.} By \cite[Corollary~2.4.21]{dnr} it suffices to show that every {\it finite-dimensional} discrete module $M$ admits a surjection by a projective. We can regard $M$ as a module over $G/H$ for some $H\in \cN$, and suppose $H$ is small enough to ensure that $|G/H|$ and $|G|$ have the same $p$-exponent.

  Naturally, $M$ is a quotient of a finite-dimensional projective $P$ over $G/H$. It remains to argue that $P$ is still projective over $G$, which will be the goal for the remainder of the proof.

  Since $P$ is $G/H$-projective, it must be projective over a Sylow $p$-subgroup $\overline{S}\le G/H$. Our choice of $H$ (such that $[G:H]$ is divisible by the same exact power of $p$ as $|G|$) means that there is a Sylow $p$-subgroup $S$ of $G$ mapping isomorphically over $\overline{S}$. We thus know that the restriction of $P$ to $S$ is projective.

  Projectivity over $G$ means showing that all higher cohomology
  \begin{equation*}
    \mathrm{Ext}^i(P,-)\cong H^i(G,-\otimes P^*),\ i\ge 1 
  \end{equation*}
  in the category ${}^d_G\mathrm{Vect}$ vanishes. We already know that it vanishes upon restricting via
  \begin{equation*}
    \mathrm{res}:H^i(G,-\otimes P^*)\to H^i(S,-\otimes P^*),
  \end{equation*}
  and the conclusion follows from the fact that this restriction morphism is one-to-one: this is more or less \cite[\S 2.4, Corollary to Proposition~9]{ser-gal}. Although the latter result refers to cohomology over $\bZ$, the techniques apply essentially verbatim over $k$.

  \vspace{.5cm}

  The above reasoning applies unequivocally in positive characteristic, but requires interpretation in characteristic zero. In that case the $p$ referred to throughout will be $0$, Sylow subgroups will be trivial, etc. The validity of the proof will not be affected if these obvious modifications are made, using the fact that in characteristic zero the higher cohomology of a profinite group is
  \begin{equation*}
    H^i(G,-) = \varinjlim_{H\in \cN}H^i(G/H,-)=\varinjlim 0 = 0,
  \end{equation*}
  i.e. the coalgebra $k(G)$ is {\it cosemisimple}.
\end{th.fld}

We note in passing that as a byproduct of the proof of \Cref{th.fld} we obtain the following characterization of projectives in ${}^d_G\mathrm{Vect}$:

\begin{corollary}\label{cor.char-proj}
  Let $G$ be a profinite group and $k$ a field of characteristic $p$. The following statements hold:
  \begin{enumerate}[(a)]
  \item\label{item:4} For every open normal subgroup $H$ with $p\centernot\mid |H|$, a $G/H$-module is projective if and only if it is projective over $G$.
  \item\label{item:5} The projective objects in ${}^d_G\mathrm{Vect}$ are direct sums of finite-dimensional, indecomposable projectives over $G/H$ for $H$ ranging over the open normal subgroups of $G$ with $p\centernot\mid |H|$.
  \end{enumerate}
\end{corollary}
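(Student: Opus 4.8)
The plan is to treat part~\labelcref{item:4} as the engine and deduce part~\labelcref{item:5} from it together with the structural input on $\cM^C$ recalled at the end of \Cref{se.prel}. Fix an open normal subgroup $H$ and consider the inflation functor $\mathrm{Inf}\colon {}^d_{G/H}\mathrm{Vect}\to {}^d_G\mathrm{Vect}$. It is exact, and it sits in a chain of adjunctions $(-)_H\dashv \mathrm{Inf}\dashv (-)^H$, where $(-)^H$ is the invariants functor of \Cref{rem.disc} and $(-)_H$ is the $H$-coinvariants functor (the largest quotient on which $H$ acts trivially). Both composites $(-)^H\circ\mathrm{Inf}$ and $(-)_H\circ\mathrm{Inf}$ are naturally isomorphic to the identity on ${}^d_{G/H}\mathrm{Vect}$, since an inflated module carries the trivial $H$-action.

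For part~\labelcref{item:4} I would argue the two implications separately, using the general principle that a left adjoint preserves projectives as soon as its right adjoint is exact. The ``if'' direction (projective over $G$ forces projective over $G/H$) needs no hypothesis on $H$: since $\mathrm{Inf}$ is exact, its left adjoint $(-)_H$ preserves projectives, and applying $(-)_H$ to an inflated $G$-projective $\mathrm{Inf}(P)$ returns $P$ itself. The ``only if'' direction is exactly where $p\nmid|H|$ is used. Under that hypothesis the functor $(-)^H$ is \emph{exact}: on each finite quotient of $H$ of order prime to $p$ taking invariants is exact by Maschke's theorem, and a discrete module is the filtered colimit of its fixed submodules, so exactness passes to the colimit. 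Then $\mathrm{Inf}$, being the left adjoint of the exact functor $(-)^H$, preserves projectives, i.e. a projective $G/H$-module inflates to a projective $G$-module. This reproves, in cleaner form, the Sylow-subgroup computation carried out for \Cref{th.fld}.

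For part~\labelcref{item:5} I would first invoke the two facts recorded at the end of \Cref{se.prel}: every projective object of ${}^d_G\mathrm{Vect}=\cM^C$ is a direct sum of finite-dimensional projectives, and by Krull--Schmidt each such summand further decomposes into finitely many finite-dimensional \emph{indecomposable} projective $G$-modules. It therefore suffices to realize each finite-dimensional indecomposable projective $P$ as an indecomposable projective over a suitable $G/H$ with $p\nmid|H|$. The action on $P$ factors through some $G/H_0$ with $H_0\in\cN$. Because a nonzero projective exists, \labelcref{item:3} of \Cref{th.fld} guarantees that $p$ has finite exponent $n$ in $|G|$; hence some quotient $G/H_1$ already carries the full $p$-part $p^n$, and $H:=H_0\cap H_1\in\cN$ satisfies $p\nmid|H|$ while $H\subseteq H_0$. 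Thus $P$ is simultaneously a $G/H$-module and $G$-projective, so the ``if'' half of \labelcref{item:4} makes it $G/H$-projective; it stays indecomposable because $\End_G(P)=\End_{G/H}(P)$. The reverse inclusion is immediate: each such $G/H$-projective is $G$-projective by the ``only if'' half of \labelcref{item:4}, and an arbitrary coproduct of projectives is projective in the Grothendieck category ${}^d_G\mathrm{Vect}$.

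The main obstacle I anticipate is part~\labelcref{item:4}, and within it the exactness of $(-)^H$: this is the single place where $p\nmid|H|$ is indispensable, and getting the colimit argument and the adjoint-functor bookkeeping exactly right---in particular noticing that the ``if'' direction needs no hypothesis while the ``only if'' does---is the crux. A secondary point requiring care in part~\labelcref{item:5} is the selection of a prime-to-$p$ subgroup $H$ small enough to support $P$, which silently relies on the finite $p$-exponent supplied by \Cref{th.fld}.
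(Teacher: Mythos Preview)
Your argument is correct. The paper supplies no separate proof of the corollary; it simply records it as ``a byproduct of the proof of \Cref{th.fld}'', so the intended proof of part~\labelcref{item:4} is the Sylow-restriction computation you allude to: a $G/H$-projective $P$ is projective over a Sylow $p$-subgroup $\overline{S}\le G/H$, the hypothesis $p\nmid |H|$ lets one lift $\overline{S}$ isomorphically to a Sylow $p$-subgroup $S\le G$, and then injectivity of the restriction map $H^i(G,-\otimes P^*)\to H^i(S,-\otimes P^*)$ (as in \cite[\S 2.4]{ser-gal}) forces the left-hand side to vanish.

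Your route to the same implication is genuinely different and cleaner: you use the adjunction $\mathrm{Inf}\dashv(-)^H$ and establish exactness of $(-)^H$ directly from Maschke on the finite quotients of $H$, passing to the colimit. This avoids both Sylow theory for profinite groups and the restriction--corestriction machinery from \cite{ser-gal}; the only cohomological input is $H^1(H/N,-)=0$ for finite groups of order prime to $p$. The paper's approach, by contrast, proves a sharper intermediate fact (projectivity can be detected on a Sylow $p$-subgroup of $G$), which is not needed for the corollary but is of independent interest. Your treatment of the ``if'' direction via $(-)_H\dashv\mathrm{Inf}$ makes explicit that this half requires no hypothesis on $H$, a point the paper leaves implicit. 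Part~\labelcref{item:5} is handled the same way in both: the comodule-theoretic input from \cite{dnr} reduces to finite-dimensional indecomposables, and the choice of $H$ with $p\nmid|H|$ small enough to support $P$ is exactly the move made in the proof of \labelcref{item:3}~$\Rightarrow$~\labelcref{item:2} in \Cref{th.fld}.
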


\begin{remark}
  If the characteristic is zero then the class of subgroups $H$ in \Cref{cor.char-proj} is unrestricted, i.e. we range over {\it all} of $\cN$.
\end{remark}

Part \Cref{item:4} of \Cref{cor.char-proj} has the following partial converse.

\begin{lemma}\label{le.res-to-syl}
Let $G$ be a profinite group, $k$ a field of characteristic $p$ and $H\le G$ a closed normal subgroup with $p\mid |H|$. Then, non-trivial $G/H$-modules cannot be projective over $G$. 
\end{lemma}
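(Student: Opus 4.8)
The plan is to reduce the assertion to a statement about finite groups via the classification of projectives in \Cref{cor.char-proj}, and then to invoke the elementary fact that over a finite group of order divisible by $p$ the trivial module is not projective. Interpreting ``non-trivial'' as ``nonzero'', suppose toward a contradiction that $M$ is a nonzero $G/H$-module that is projective over $G$. By \Cref{cor.char-proj}\Cref{item:5} we may write $M\cong\bigoplus_{\alpha}P_{\alpha}$, where each $P_{\alpha}$ is a finite-dimensional projective module over $G/H_{\alpha}$ for some open normal subgroup $H_{\alpha}\trianglelefteq G$ with $p\centernot\mid|H_{\alpha}|$. Since $M\ne 0$, some summand $P:=P_{\alpha_{0}}$ is nonzero; write $H':=H_{\alpha_{0}}$.

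Two observations drive the proof. First, since $M$ is inflated from $G/H$, the subgroup $H$ acts trivially on $M$, hence on its direct summand $P$; and because $P$ is inflated from $G/H'$, this action of $H$ factors through the finite group $\overline{H}:=HH'/H'\cong H/(H\cap H')$, a subgroup of $G/H'$. Second, $\overline{H}$ has order divisible by $p$: the subgroup $H\cap H'$ is open in $H$ (as $H'$ is open in $G$), so $\overline{H}$ is finite, and applying the $p$-exponent $n_{p}$ of supernatural orders to the exact sequence $1\to H\cap H'\to H\to\overline{H}\to 1$ gives $n_{p}(H)=n_{p}(H\cap H')+n_{p}(\overline{H})$. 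Since $H\cap H'\le H'$ forces $n_{p}(H\cap H')\le n_{p}(H')=0$, we obtain $n_{p}(\overline{H})=n_{p}(H)\ge 1$; as $\overline{H}$ is finite this means $p\mid|\overline{H}|$.

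To conclude, I would restrict $P$ from $k[G/H']$ to $k[\overline{H}]$. Restriction sends projectives to projectives, since $k[G/H']$ is free as a module over the group algebra of any subgroup, so $P|_{\overline{H}}$ is a nonzero projective $k[\overline{H}]$-module. But $\overline{H}$ acts trivially on $P$, so $P|_{\overline{H}}$ is a nonzero direct sum of copies of the trivial module $k$; such a module is projective only if $k$ itself is projective over $k[\overline{H}]$, which would require $|\overline{H}|$ to be invertible in $k$ (one would have to split the augmentation $k[\overline{H}]\to k$). This contradicts $p\mid|\overline{H}|$, and hence forces $M=0$.

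Once \Cref{cor.char-proj} is available the argument is largely bookkeeping; the one step I would treat as the crux is the order computation $p\mid|\overline{H}|$, where the hypothesis $p\mid|H|$ must be played off against the constraint $p\centernot\mid|H'|$ coming from the structure theorem, the whole point being that $H\cap H'$ carries no $p$-part, so the $p$-part of $H$ survives in the finite quotient $\overline{H}$. (Alternatively one can sidestep \Cref{cor.char-proj} and argue cohomologically: choose a subgroup $C_{p}\le\overline{H}$ of order $p$ via Cauchy's theorem and use that the trivial $C_{p}$-module has nonvanishing higher cohomology; but the route above is shorter.)
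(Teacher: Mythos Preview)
Your proof is correct and takes a route that differs from the paper's in an instructive way. Both arguments invoke \Cref{cor.char-proj}, but at opposite ends. The paper first restricts $P$ along the inclusion of a Sylow $p$-subgroup $T\le H$ into $G$; since $T$ may have infinite index this requires knowing that restriction ${}^d_G\mathrm{Vect}\to{}^d_T\mathrm{Vect}$ preserves projectivity, which the paper obtains from the adjunction with the exact coinduction functor $M\mapsto\mathrm{Map}_T(G,M)$. Only then does it apply \Cref{cor.char-proj} to the pro-$p$-group $T$. You instead apply \Cref{cor.char-proj} to $G$ at the outset, reducing immediately to a finite-dimensional summand $P$ that is projective over a \emph{finite} group $G/H'$; the remaining restriction is to a subgroup of a finite group, where projectivity is preserved for the elementary reason that $k[G/H']$ is free over $k[\overline{H}]$. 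Your approach thus sidesteps the profinite adjunction machinery entirely, at the cost of the supernatural-order computation $p\mid|\overline{H}|$, which is precisely where the hypotheses $p\mid|H|$ and $p\centernot\mid|H'|$ are played off against each other. Either route is valid; yours is arguably the more self-contained once \Cref{cor.char-proj} is granted.
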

\begin{proof}
  Let $P$ be a hypothetical non-zero $G/H$-module projective over $G$ and let $T\le H$ be a (necessarily non-trivial) Sylow $p$-subgroup.

  The restriction functor
  \begin{equation}\label{eq:9}
    \mathrm{res}:{}^d_G\mathrm{Vect}\to {}^d_T\mathrm{Vect} 
  \end{equation}
  is left adjoint to the exact functor
  \begin{equation*}
    {}^d_T\mathrm{Vect} \ni M\mapsto\mathrm{Map}_T(G,M) \in {}^d_G\mathrm{Vect},
  \end{equation*}
  where
  \begin{itemize}
  \item $\mathrm{Map}_T$ denotes continuous $T$-equivariant maps $G\to M$ with $M$ equipped with the discrete topology;
  \item $T$-equivariant means $f(tg)=tf(g)$ for all $t\in T$ and $g\in G$;
  \item the $G$-action is given by $g\triangleright f = f(\bullet\, g)$;
  \end{itemize}
  see \cite[\S 6.10]{rz}. Since it has an exact right adjoint, it follows that \Cref{eq:9} preserves projectivity. In particular, $P$ will be projective over $T$. This, however, is impossible: $T$ is a pro-$p$-group, which \Cref{cor.char-proj} \Cref{item:5} ensures cannot have non-zero discrete projectives in characteristic $p$.
\end{proof}

We now have the following alternate take on \Cref{th.iff-fin}:

\pf{th.iff-fin}
\begin{th.iff-fin}[alternative]
  We begin as before, assuming that $G$ is an infinite profinite group and $P$ a non-zero projective object in ${}^d_G\mathrm{Mod}$. We then have the epimorphism \Cref{eq:2} onto $P$ from a direct sum $F=\bigoplus_{H\in \cN}F_H$ of free $G/H$-modules.

  By projectivity the epimorphism $P$ splits, realizing $P$ as a summand of $F$. In particular, there is some finite multiset of groups $H_i\in \cN$ so that
  \begin{equation}\label{eq:8}
    \{0\}\ne \left(\bigoplus_i \bZ[G/H_i]\right)\cap P\subset F.
  \end{equation}

  Now apply the scalar-extension functor
  \begin{equation*}
    \mathrm{E}_p:=\bF_p\otimes_{\bZ}-:{}^d_G\mathrm{Mod}\to {}^d_G\mathrm{Vect}
  \end{equation*}
  for a finite field $\bF_p$ with $p$ elements, with $p$ chosen judiciously (more on this below). The functor preserves projectivity (because it is left adjoint to the exact scalar restriction functor), so $\mathrm{E}_p(P)$  is projective (and clearly non-zero, since it was obtained by scalar-extending a free abelian group).

  We now have direct sum decomposition
  \begin{equation}\label{eq:7}
    \mathrm{E}_p(P)\oplus * \cong \mathrm{E}_{p}(F)=\bigoplus_{H\in\cN} \bF_p[G/H]^{\oplus}.
  \end{equation}
  The summands on the right-most side can be further decomposed as finite direct sums of modules with local endomorphism rings (because they are modules over finite group algebras over $\bF_{p}$). 

  \Cref{eq:8} implies its counterpart over $\bF_p$:
  \begin{equation*}
    \{0\}\ne \left(\bigoplus_i \bF_p[G/H_i]\right)\cap (P/pP)\subset F/pF.
  \end{equation*}
  The proof of \cite[Theorem~26.5]{af} applied to $M=F/pF$, $K=P/pP$, and $N=\bigoplus_{i}\bF_{p}[G/H_{i}]$ shows that $P/pP$ has a non-zero summand $H$ isomorphic to a summand of $N$. Thus \cite[Corollary~26.6]{af} implies that $P/pP$ has a summand, say $S$, isomorphic to 
  an indecomposable summand of $\bF_p[G/H_i]$ for one of the finitely many $i$ in \Cref{eq:8}. Note that $S$ is projective in ${}^d_G\mathrm{Vect}$, being a summand of the projective object $P/pP$.

  Now we can specialize $p$: choose it so as to ensure that it divides the (supernatural) order of $H=\bigcap_i H_i$ (this is possible, since the latter group has finite index in the infinite profinite group $G$). The projectivity of $S$ over $G$ contradicts \Cref{le.res-to-syl}, finishing the proof.
\end{th.iff-fin}


\section{Ring-theoretic interpretation}

\Cref{th.iff-fin} and \Cref{pr.not-ab4} can be shown in a more general setting of a ring with a filter of ideals.

Let $R$ be a ring. We denote by ${}_{R}\mathrm{Mod}$ the category of (left) $R$-modules.

Let $\cI$ be a downward filtered set of (two-sided) ideals of $R$, that is, $\cI$ is a non-empty set of ideals and for any $I_{1},I_{2}\in\cI$, there exists $J\in\cI$ such that $J\subseteq I_{1}$ and $J\subseteq I_{2}$. For ease of comparison with the previous section, we use the notation
\begin{equation*}
	{}_{R}^{g}\mathrm{Mod}:=\{M\in{}_{R}\mathrm{Mod}\mid M=\varinjlim_{I\in\cI}M_{I}\}
\end{equation*}
where
\begin{equation*}
	M_{I}:=\{x\in M\mid Ix=0\}.
\end{equation*}
The forgetful functor ${}_{R}^{g}\mathrm{Mod}\to{}_{R}\mathrm{Mod}$ admits a right adjoint given by $M\mapsto\varinjlim_{I}M_{I}$. Products in ${}_{R}^{g}\mathrm{Mod}$ are described in a similar way to \Cref{rem.disc}.

In view of \cite[Proposition~VI.4.2]{Stenstrom}, ${}_{R}^{g}\mathrm{Mod}$ is the prelocalizing subcategory of ${}_{R}\mathrm{Mod}$ corresponding to a left linear topology of $R$ that admits a fundamental system of neighborhoods of $0\in R$ consisting of two-sided ideals.

\begin{remark}\label{rem.ex.profin}
	For a profinite group $G$, we take the group algebra $R:=\bZ[G]$ without topology. The poset $\cN$ of open normal subgroups of $G$ defines the downward filtered set of ideals
	\begin{equation*}
		\cI:=\{I_{H}\mid H\in\cN\}
	\end{equation*}
	where $I_{H}$ is the kernel of the canonical surjection $\bZ[G]\to\bZ[G/H]$. For each $M\in {}_{G}\mathrm{Mod}={}_{R}\mathrm{Mod}$, the largest submodule of $M$ belonging to ${}_{G/H}\mathrm{Mod}$ is $M^{H}$ as well as $M_{I_{H}}$. Thus $M^{H}=M_{I_{H}}$. By the characterization \Cref{eq:1} in \Cref{rem.disc}, we have ${}_{G}^{g}\mathrm{Mod}={}_{R}^{g}\mathrm{Mod}$.
\end{remark}

For two ideals $I,J\subseteq R$, define the ideal
\begin{equation*}
	(J:I):=\{r\in R\mid Ir\subseteq J\}.
\end{equation*}
Note that $(R/J)_{I}=(J:I)/J$.

\begin{theorem}\label{thm.ring}
  Let $R$ be a ring and $\cI$ a downward filtered set of ideals of $R$. Suppose that for each $I\in\cI$,
  \begin{equation}\label{eq.cond.ideal}
    \bigcap_{J}((J:I)+I)=I
  \end{equation}
  where $J$ runs over all ideals in $\cI$ with $J\subseteq I$. Then the following hold:
  \begin{enumerate}[(a)]
  \item\label{item.ring.proj} ${}_{R}^{g}\mathrm{Mod}$ has no non-zero projective objects.
  \item\label{item.ring.ab4d} ${}_{R}^{g}\mathrm{Mod}$ does not satisfy \textnormal{Ab4*}.
  \end{enumerate}
\end{theorem}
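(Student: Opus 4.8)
The plan is to transport the arguments of \Cref{th.iff-fin} and \Cref{pr.not-ab4} to the present setting, with the cyclic modules $R/I$ (for $I\in\cI$) playing the role formerly played by the free modules $\bZ[G/H]$, and with the hypothesis \Cref{eq.cond.ideal} supplying the ``index growth'' that drove the profinite arguments. The key dictionary is the identity $(R/J)_I=(J:I)/J$ recorded just before the statement: under the canonical surjection $R/J\to R/I$ (for $J\subseteq I$ in $\cI$) the submodule $(R/J)_I$ of $I$-invariants has image $((J:I)+I)/I$ in $R/I$. Thus \Cref{eq.cond.ideal} says exactly that, for each $I\in\cI$,
\[
\bigcap_{J\subseteq I}\ \mathrm{im}\big((R/J)_I\to R/I\big)=0\quad\text{in }R/I,
\]
i.e. the images of the $I$-invariants coming from deeper levels $J$ shrink to zero. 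This is the abstract form of the computation that $[K\cap H\cap H_i:K\cap H\cap H_j]\to\infty$ in the proof of \Cref{th.iff-fin}.

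For \Cref{item.ring.proj} I would argue by contradiction as in \Cref{th.iff-fin}. Assuming $P\ne 0$ projective, writing $P=\varinjlim_I P_I$ and covering each $P_I$ by a free $R/I$-module yields an epimorphism $F=\bigoplus_\lambda R/I_\lambda\twoheadrightarrow P$, which splits, exhibiting $P$ as a direct summand of $F$ via $P\xhookrightarrow{\iota}F\xrightarrow{\rho}P$. I would then form the inverse system $\{E_{I'}\}_{I'\in\cI}$ obtained by replacing each summand $R/I_\lambda$ by $R/(I_\lambda\cap I')$ (still discrete, since $I_\lambda\cap I'$ contains a member of $\cI$ by filteredness), with the evident transition surjections $E_{I''}\to E_{I'}$ for $I''\subseteq I'$ and compatible epimorphisms $E_{I'}\to F$. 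Using projectivity I would lift $\iota$ to a compatible family $s_{I'}\colon P\to E_{I'}$, hence to $P\to\varprojlim_{I'}E_{I'}$ with $\rho\circ(\varprojlim E_{I'}\to F)\circ(P\to\varprojlim E_{I'})=\mathrm{id}_P$. Finally I would show $\varprojlim_{I'}E_{I'}=0$: by \Cref{rem.disc} this limit is the largest discrete submodule of the full inverse limit, and on a single summand (where cofinally $I_\lambda\cap I'=I'$) it is $\varprojlim_{I'\subseteq I_\lambda}R/I'$. An element of its $I^{\circ}$-invariants lives at level $I'$ inside $\mathrm{im}\big((R/I'')_{I^{\circ}}\to R/I'\big)$ for all $I''\subseteq I'$; restricting to $I'\subseteq I^{\circ}$ (cofinal) makes the invariance ideal $I^{\circ}$ contain the running base $I'$, so these images sit inside $\mathrm{im}\big((R/I'')_{I'}\to R/I'\big)$ and \Cref{eq.cond.ideal} forces vanishing. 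This gives $\mathrm{id}_P=0$, the contradiction.

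For \Cref{item.ring.ab4d} I would imitate \Cref{pr.not-ab4}: fix a proper $I\in\cI$ and take the product in ${}_R^g\mathrm{Mod}$ of the augmentation-type epimorphisms $R/J\to R/I$, $J\subseteq I$ in $\cI$. As $R/I$ is killed by $I$, the product of the targets is the ordinary product and contains the ``all-$\overline 1$'' element $(\overline 1)_J$, which I claim is not in the image. An element of the categorical product of the sources is annihilated by a common $I'\in\cI$, so its $J$-component lies in $(R/J)_{I'}$ and maps into $((J:I')+I)/I$; hence $(\overline 1)_J$ is hit only if $1\in(J:I')+I$ for all $J\subseteq I$ and a single $I'$. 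When $I'\not\subseteq I$ this fails immediately, since every image $((J:I')+I)/I$ lies in $(R/I)_{I'}$, which does not contain $\overline 1$; and when $I'\subseteq I$ I would invoke \Cref{eq.cond.ideal} (reducing cofinally to deep $J\subseteq I'$) to produce a $J$ with $1\notin(J:I')+I$.

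The hard part will be the careful bookkeeping of two a priori unrelated ideals: the base ideal $I$ occurring in \Cref{eq.cond.ideal} and the annihilator $I'$ of a given limit- or product-element. Since \Cref{eq.cond.ideal} is a statement on the diagonal (invariance ideal equal to base ideal), the crux is to use downward filteredness to arrange, cofinally, that the annihilator contains the running base, so that the monotonicity $(R/J)_{I'}\subseteq(R/J)_I$ for $I\subseteq I'$ reduces matters to the diagonal case; the residual off-diagonal case $I'\subseteq I$ in part \Cref{item.ring.ab4d} is exactly where this filtered reduction must be carried out by passing to sufficiently deep $J$. Two further technical points need attention: the interchange of the inverse limit with the infinite direct sum defining $F$ (handled, as in \Cref{th.iff-fin}, by reducing to a single summand), and the construction of the compatible cone $P\to\varprojlim E_{I'}$ over the directed index $\cI$ — routine by iterated lifting for a descending chain as in the profinite case, but requiring genuine care over a general filtered poset, where the coherence of the chosen lifts is the delicate step.
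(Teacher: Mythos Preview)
Your plan for \Cref{item.ring.ab4d} is essentially the paper's, and your identification of the key mechanism (the images $((J:I)+I)/I$ shrinking to zero, rephrased as \Cref{lem.equiv.cond} in the paper) is exactly right.

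For \Cref{item.ring.proj}, however, your approach diverges from the paper's and runs into the obstacle you yourself flag. You propose to mimic the inverse-limit argument of the first proof of \Cref{th.iff-fin}, building an entire system $\{E_{I'}\}_{I'\in\cI}$, lifting $P\to F$ to a compatible cone $P\to\varprojlim_{I'}E_{I'}$, and then showing the limit vanishes. The vanishing step is fine, but the lifting step is not merely ``delicate'': over a general filtered poset there is no mechanism for producing coherent lifts. In the profinite proof the inverse system was indexed by a \emph{countable chain} $H_0>H_1>\cdots$, and lifts were built by simple recursion; here $\cI$ need not contain any countable chain along which the analogous limit vanishes (the hypothesis \Cref{eq.cond.ideal} demands the intersection over \emph{all} $J\subseteq I$, not a countable subfamily), so you cannot reduce to a chain, and over the full poset projectivity of $P$ gives individual lifts $P\to E_{I'}$ but no compatibility among them. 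Since you have already argued that the target $\varprojlim_{I'}E_{I'}$ is zero, a compatible cone with the required factorization through $F$ simply cannot exist; the ``genuine care'' you anticipate is an actual obstruction, not a technicality.

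The paper sidesteps this entirely by replacing the inverse system with a \emph{single} lifting step. Fix a nonzero $x\in P_K$, split $P\hookrightarrow F$, write the image $\widetilde x=\sum_{j=1}^n\widetilde x_j$ supported on finitely many summands $R/I_j$ of $F$, and use \Cref{eq.cond.ideal} (via \Cref{lem.equiv.cond}) to choose for each $j$ a single $J_j\subseteq I_j$ such that $\widetilde x_j$ is not in the image of $(R/J_j)_K\to(R/I_j)_K$. Replacing only those finitely many summands yields one module $E$ and one surjection $E\to F$; a single application of projectivity lifts the section $P\to F$ through $E$, but then $\widetilde x$ cannot lie in the image of $E_K\to F_K$, a contradiction. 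No tower of coherent lifts is needed, and the finiteness of the support of $\widetilde x$ is what makes one step suffice.
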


The proof of these are parallel to the proofs of \Cref{th.iff-fin} and \Cref{pr.not-ab4} but needs some modifications. We first rephrase the condition \Cref{eq.cond.ideal}.

\begin{lemma}\label{lem.equiv.cond}
  The hypothesis of \Cref{thm.ring} is equivalent to the following condition: For any $I,K\in\cI$ with $I\subseteq K$ and $0\neq x\in(R/I)_{K}$, there exists $J\in\cI$ with $J\subseteq I$ such that $x$ does not belong to the image of the canonical morphism
  \begin{equation*}
    (R/J)_{K}\to (R/I)_{K}.
  \end{equation*}
\end{lemma}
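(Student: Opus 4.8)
The plan is to unwind both conditions into equalities of intersections of ideals, at which point the theorem's hypothesis is recognized as the ``diagonal'' instance $K=I$ of the lemma's condition, with the reverse implication supplied by a monotonicity property of the colon operation.

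First I would identify the image of the canonical morphism. Since $J\subseteq I$, the quotient map $R/J\to R/I$ carries $(R/J)_K=(J:K)/J$ into $(R/I)_K=(I:K)/I$, and its image is exactly $((J:K)+I)/I$; here one checks that the target is correct using $(J:K)\subseteq(I:K)$ (immediate from $J\subseteq I$) together with $I\subseteq(I:K)$ (which holds because $I$ is two-sided, so $Kr\subseteq I$ for $r\in I$). Writing a general element of $(R/I)_K$ as $x=r+I$ with $r\in(I:K)$, the conditions $x\neq 0$ and ``$x$ lies outside the image of $(R/J)_K\to(R/I)_K$'' translate to $r\notin I$ and $r\notin(J:K)+I$ respectively. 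Consequently, for fixed $I\subseteq K$, the requirement that every nonzero $x$ admit a witnessing $J$ is precisely the statement $\bigcap_{J}((J:K)+I)=I$, the union over $J\in\cI$ with $J\subseteq I$; the inclusion $\supseteq$ is automatic, and since each $(J:K)+I$ sits inside $(I:K)$ the whole intersection does too, so only the inclusion $\subseteq$ carries content.

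With this reformulation the equivalence drops out in both directions. For the implication from the lemma's condition to \Cref{eq.cond.ideal}, I would specialize $K=I$, which is a legitimate instance because $\cI$ is downward filtered and the hypothesis $I\subseteq K$ permits equality; since $(I:I)=R$, the reformulated statement becomes $\bigcap_{J}((J:I)+I)=I$, which is exactly \Cref{eq.cond.ideal}. Conversely, assuming \Cref{eq.cond.ideal}, for arbitrary $I\subseteq K$ in $\cI$ I would invoke the monotonicity $(J:K)\subseteq(J:I)$, valid because $I\subseteq K$ forces $Ir\subseteq Kr$ and hence $Kr\subseteq J\Rightarrow Ir\subseteq J$; this yields $(J:K)+I\subseteq(J:I)+I$ for every $J\subseteq I$, so $\bigcap_{J}((J:K)+I)\subseteq\bigcap_{J}((J:I)+I)=I$, and the reverse inclusion is trivial.

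I do not expect any serious obstacle: the argument is essentially bookkeeping once the image of the canonical map is computed correctly. The two points that require attention are the correct identification of that image as $((J:K)+I)/I$ inside $(I:K)/I$, and the monotonicity $(J:K)\subseteq(J:I)$ for $I\subseteq K$, which is the one substantive ingredient driving the passage from \Cref{eq.cond.ideal} back to the lemma's condition.
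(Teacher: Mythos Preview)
Your proposal is correct and follows essentially the same approach as the paper: both identify the image of the canonical map and reduce the general case to the diagonal case $K=I$ via the containment $(J:K)\subseteq(J:I)$. The only cosmetic difference is that the paper packages this monotonicity as a commutative square with vertical inclusions $(R/J)_{K}\hookrightarrow(R/J)_{I}$ and $(R/I)_{K}\hookrightarrow(R/I)_{I}$, reducing to $K=I$ before computing the image, whereas you compute the image $((J:K)+I)/I$ for general $K$ first and then invoke the ideal containment directly.
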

\begin{proof}
  For ideals $J\subseteq I\subseteq K$, the commutative diagram
  \begin{equation*}
    \begin{tikzcd}
      (R/J)_{K}\ar[r]\ar[d,hook] & (R/I)_{K}\ar[d,hook]\\
      (R/J)_{I}\ar[r] & (R/I)_{I}
    \end{tikzcd}
  \end{equation*}
  shows that any element of $(R/I)_{I}$ not in the image of $(R/J)_{I}\to(R/I)_{I}$ does not belong to the image of $(R/J)_{K}\to(R/I)_{K}$. Thus the condition is equivalent to that with $K=I$.
  
  Since $(R/J)_{I}=(J:I)/J$ and $(R/I)_{I}=R/I$, the image of $(R/J)_{I}\to(R/I)_{I}$ is $((J:I)+I)/I$. Thus the condition is equivalent to
  \begin{equation*}
    \bigcap_{J}\frac{(J:I)+I}{I}=0,
  \end{equation*}
  which is equivalent to \Cref{eq.cond.ideal}.
\end{proof}

\begin{remark}
  The process of proving the vanishing of the limit $\varprojlim_i E_i$ in \Cref{th.iff-fin} shows in particular that for any $N_{i},N_{j},K\in\cN$ with $N_{j}\leq N_{i}$, the image of $\bZ[G/N_{j}]^{K}\to\bZ[G/N_{i}]^{K}$ consists of $[K\cap N_{i}:K\cap N_{j}]$-multiples and the index grows indefinitely when $N_{j}$ gets smaller. With the terminology of \Cref{rem.ex.profin}, this implies that for every $0\neq x\in R/I_{N_{i}}$, there exists some $j$ such that $x$ is not contained in the image of $(R/I_{N_{j}})_{I_{K}}\to(R/I_{N_{i}})_{I_{K}}$. Thus the condition in \Cref{lem.equiv.cond} is satisfied and hence the hypothesis of \Cref{thm.ring} holds.
\end{remark}

\pf{thm.ring}
\begin{thm.ring}
  \Cref{item.ring.proj} Assume that ${}_{R}^{g}\mathrm{Mod}$ has a nonzero projective object $P$. Let $0\neq x\in P$ and take $K\in\cI$ such that $x\in P_{K}$. Similarly to the proof of \Cref{th.iff-fin}, there is an epimorphism $F=\bigoplus_{I\in\cI}F(I)\to P$ where $F(I)$ is a free $R/I$-module. We can assume that $F(I)\neq 0$ only if $I\subseteq K$.
  
  Since $P$ is projective, the epimorphism $F\to P$ splits. We fix a section $P\to F$ and let $\widetilde{x}\in F$ be the image of $x$ by the section. There are finitely many summands $R/I_{1},\ldots,R/I_{n}$ of $F$ such that
  \begin{equation}\label{eq.x.tilde}
    \widetilde{x}=\sum_{j}\widetilde{x}_{j}\in\bigoplus_{j=1}^{n}R/I_{j}
  \end{equation}
  where $0\neq\widetilde{x}\in R_{I_{j}}$. Since $F(I_{j})\neq 0$, we have $I_{j}\subseteq K$ for all $j$.
  
  Applying the condition in \Cref{lem.equiv.cond}, we obtain $J_{1},\ldots,J_{n}\in\cI$ with $J_{j}\subseteq I_{j}$ such that $\widetilde{x}_{j}$ does not belong to the image of
  \begin{equation*}
    (R/J_{j})_{K}\to(R/I_{j})_{K}.
  \end{equation*}
  We construct a module $E$ from $F$ by substituting $R/J_{j}$ for $R/I_{j}$. The section $P\to F$ lifts along the canonical epimorphism $E\to F$, and we obtain a commutative diagram
  \begin{equation*}
    \begin{tikzpicture}[auto,baseline=(current  bounding  box.center)]
      \path[anchor=base] (0,0) node (pl) {$P_{K}$} +(2,.5) node (lim) {$E_{K}$} +(4,0) node (f) {$F_{K}\rlap{.}$};
      \draw[->] (pl) to[bend right=6] node[pos=.5,auto,swap] {$\scriptstyle $} (f);
      \draw[->] (pl) to[bend left=6] node[pos=.5,auto] {$\scriptstyle$} (lim);
      \draw[->] (lim) to[bend left=6] node[pos=.5,auto] {$\scriptstyle$} (f);
    \end{tikzpicture}
  \end{equation*}
  However, $\widetilde{x}$, the image of $x$ along $P_{K}\to F_{K}$, does not belong to the image of $E_{K}\to F_{K}$. This is a contradiction.
  
  \Cref{item.ring.ab4d} We fix $I\in\cI$ until the end of the proof. We claim that the product of the morphisms
  \begin{equation*}
    R/J\to R/I
  \end{equation*}
  where $J\in\cI$ with $J\subseteq I$, is not an epimorphism. For each $K\in\cI$ with $K\subseteq I$, the condition in \Cref{lem.equiv.cond} implies that there is $J\in\cI$ with $J\subseteq I$ such that the image of
  \begin{equation*}
    (R/J)_{K}\to(R/I)_{K}
  \end{equation*}
  does not contain $1\in R/I=(R/I)_{K}$. The product $\prod_{J}(R/J)$ in ${}_{R}^{g}\mathrm{Mod}$ is the direct limit of $(\prod^{f}_{J}(R/J))_{K}$, where $\prod^{f}$ denotes the product in ${}_{R}\mathrm{Mod}$ and $K$ runs over all $K\in\cI$ with $K\subseteq I$. Thus the all-$1$ element of $\prod_{J}(R/I)$ does not belong to the image of $\prod_{J}(R/J)$.
\end{thm.ring}


\bibliographystyle{customamsalpha}
\addcontentsline{toc}{section}{References}

\providecommand{\bysame}{\leavevmode\hbox to3em{\hrulefill}\thinspace}
\providecommand{\MR}{\relax\ifhmode\unskip\space\fi MR }
\providecommand{\MRhref}[2]{%
  \href{http://www.ams.org/mathscinet-getitem?mr=#1}{#2}
}
\providecommand{\href}[2]{#2}

\Addresses

\end{document}